\title{Algebras of Quasi-Pl\"ucker Coordinates are Koszul}
\date{\today}
\author{Robert Laugwitz}
\address{Department of Mathematics,
Rutgers University,
110 Frelinghuysen Road,
Piscataway, NJ 08854-8019, USA}
\email{robert.laugwitz@rutgers.edu}
\urladdr{https://www.math.rutgers.edu/~rul2/}
\author{Vladimir Retakh}
\address{Department of Mathematics,
Rutgers University,
110 Frelinghuysen Road,
Piscataway, NJ 08854-8019, USA}
\email{vretakh@math.rutgers.edu}
\urladdr{https://www.math.rutgers.edu/~vretakh/}
\newcommand{\der}{\operatorname{d}}
\newcommand{\gr}{\operatorname{gr}}
\newcommand{\BPl}[1]{B_n^{(#1)}}
\newcommand{\CPl}[1]{C_n^{(#1)}}
\newcommand{\QPl}[1]{Q_n^{(#1)}}
\newcommand{\RPl}[1]{R_n^{(#1)}}
\newcommand{\FPl}[1]{F_{#1}}
\newcommand{\op}[1]{\operatorname{#1}}
\newcommand{\cO}{\mathcal{O}}
\newcommand{\mC}{\mathbb{C}}
\newtheorem{theorem}{Theorem}[]
\newtheorem{proposition}[theorem]{Proposition}
\newtheorem{corollary}[theorem]{Corollary}
\newtheorem{lemma}[theorem]{Lemma}
\newtheorem{theorem*}{Theorem}
\theoremstyle{definition}
\newtheorem{definition}[theorem]{Definition}
\theoremstyle{remark}
\newtheorem{example}[theorem]{Example}
\subjclass[2010]{Primary 16S37; Secondary 15A15}
\keywords{Non-homogeneous Koszul Algebras, Quasi--Pl\"ucker Coordinates, Quadratic Gr\"obner Bases}
\begin{document}

\begin{abstract}
Motivated by the theory of quasi-determinants, we study non-commutative algebras of quasi-Pl\"ucker coordinates. We prove that these algebras provide new examples of non-homogeneous quadratic Koszul algebras by showing that their quadratic duals have quadratic Gr\"obner bases.
\end{abstract}

\maketitle


\section{Introduction}

The Koszul property  of the commutative quadratic algebra of Pl\"ucker coordinates is a well-known fact (see \cite{MS}*{Theorem 14.6} for a textbook exposition). In this paper we introduce and study non-commutative analogues of this algebra, using the quasi-Pl\"ucker coordinates defined in \cite{GR4}*{Section~II}. In particular, we establish the Koszul property for these non-homogeneous quadratic algebras.

We denote by $\underline{n}$ the set $\lbrace 1,2,\ldots,n\rbrace$. Given ordered sets $I\subset J$, we denote by $J\setminus I$ the ordered set obtained by removing $I$, and by $J|K$ the ordered set obtained by appending an ordered set $K$. The set $\lbrace j\rbrace$ containing one element $j$ is simply denoted by $j$.

\subsection{Commutative Pl\"ucker Coordinates}\label{classicalsection}

For $k\leq n$ and a $k\times n$-matrix $A$ with commutative entries we can choose a subset $I=\lbrace i_1,\ldots,i_k\rbrace$ of the column indices $\underline{n}$ and consider the \emph{Pl\"ucker coordinates}
\begin{equation}
p_I(A):=\det A(i_1,\ldots, i_k),
\end{equation}
using the $k\times k$ submatrix with columns corresponding to the indices in $I$. It is well-known (see e.g. \cite{HP}*{Chapter VII.6}) that the $p_I(A)$ satisfy $\op{GL}_n$-invariance, skew-symmetry with respect to commuting columns, and the \emph{Pl\"ucker identity}
\begin{equation}\label{classicalpluecker}
\sum_{t=1}^{k+1}(-1)^t p_{I|j_t}(A)p_{J\setminus j_t }(A) = 0,
\end{equation}
for subsets $I=\lbrace i_1,\ldots, i_{k-1}\rbrace$ and $J=\lbrace j_1,\ldots,j_{k+1}\rbrace$ of the column indices. The ideal of all relations among the Pl\"ucker coordinates is generated by these relations (\cite{Wei}*{Chapter IV \S 5}, cf. \cite{Stu}*{Section 3.1} or \cite{DK}*{Theorem 4.4.5}).

For example, let $k=2<3=n$. Then we can study a commutative algebra generated by elements $p_{12}, p_{13}, p_{23}$, and the Pl\"ucker relations add no extra relations.
Letting $k=2<4=n$ and choosing $I=\lbrace 1\rbrace$, $J=\lbrace 2,3,4\rbrace$ one obtains the classical identity
\begin{equation}\label{smallex}
p_{12}p_{34}-p_{13}p_{24}+p_{14}p_{23}=0.
\end{equation}

For $k=3$ and $n=6$ we, for example, get the relations
\begin{align}
p_{123}p_{456}-p_{124}p_{356}+p_{125}p_{346}-p_{126}p_{345}&=0,\label{k3eqpl1}\\
p_{123}p_{245}-p_{124}p_{235}+p_{125}p_{234}&=0,\label{k3eqpl2}
\end{align}
where $I=\lbrace 1,2\rbrace$ and $J=\lbrace 3,4,5,6\rbrace$ in Equation (\ref{k3eqpl1}) and $J=\lbrace 2,3,4,5\rbrace$ in Equation (\ref{k3eqpl2}),
plus similar relations interchanging the roles of the numbers in $\underline{6}$.

One can consider the symbols $p_I$ as generators of a quadratic commutative algebra, the quadratic quotient  $\cO_{k,n}$ of the polynomial algebra $\mC[p_{I}\mid I\subset \underline{n}]$ by the relations (\ref{classicalpluecker}), and skew-symmetry with respect to commuting indices. It is well-known that $\cO_{k,n}$  is a Koszul ring since the relations give a quadratic Gr\"obner basis. This was proved in \cite{GRS}, and also follows from \cite{Kem} (where Koszul rings are called \emph{wonderful rings}), using results of \cite{DEP}. The result of \cite{GRS} was reinterpreted in Gr\"obner basis terminology in \cite{SW}, see also \cite{MS}*{Theorem 14.6} for a textbook exposition.

The Hilbert series of $\cO_{k,n}$ can be computed combinatorially using methods from \cite{Stu}. 
In the above example of $\cO_{2,4}$, one obtains the closed formula (see \cite{GW}*{Section~7})
\begin{equation}
H(\cO_{2,4},t)=\frac{1+t}{(1-t)^5}=1 + 6 q + 20 q^2 + 50 q^3 + 105 q^4 + 196 q^5 + O(q^6).
\end{equation}

The Pl\"ucker coordinates $p_I(A)$ define an embedding of the Grassmannian $G_{k,n}$ into projective space of dimension $\binom{n}{k}-1$. The coordinate ring of $G_{k,n}$ via the Pl\"ucker embedding is the quadratic algebra $\cO_{k,n}$ considered above.

\subsection{Non-commutative Pl\"ucker Coordinates}\label{quasidets}

Analogues of Pl\"ucker coordinates for a $k\times n$-matrix with non-commuting entries are obtained using the theory of quasi-de\-ter\-mi\-nants \cites{GR4,GGRW} as ratios of two quasi-minors. More precisely, given a choice of two indices $i,j\in \underline{n}$, and a subset $i\notin I \subset \underline{n}$ of size $k-1$ and a matrix $A$ with coefficients in a division ring, the \emph{quasi-Pl\"ucker coordinate} $q_{ij}^I$ is defined as the following ratio of non-commutative analogues of maximal minors:
\begin{equation}\label{quasidetformula}
q_{ij}^I=q_{ij}^I(A)=\underbrace{\begin{vmatrix}
a_{1i}&a_{1i_1}&\hdots& a_{1i_{k-1}}\\
\vdots && \vdots \\
a_{ki}&a_{ki_1}&\hdots& a_{ki_{k-1}}
\end{vmatrix}_{si}^{-1}}_{p^{(s)}_{i| I}(A)^{-1}}
\underbrace{\begin{vmatrix}
a_{1j}&a_{1i_1}&\hdots& a_{1i_{k-1}}\\
\vdots && \vdots \\
a_{kj}&a_{ki_1}&\hdots& a_{ki_{k-1}}
\end{vmatrix}_{sj}}_{p^{(s)}_{j| I}(A)},
\end{equation}
which is independent of choice in $s$, undefined if $i\in I$, and zero if $j\in I$.
The following analogue of the Pl\"ucker relations holds for these  non-commutative analogues of Pl\"ucker coordinates:
\begin{equation}\label{quasiplueckerel}
\sum_{j\in L} q_{ij}^Mq_{ji}^{L\setminus j }=1.
\end{equation}
In the case where the entries of $A$ commute, (\ref{quasiplueckerel}) recovers the classical relation (\ref{classicalpluecker}).
Moreover, symmetry in changing the order of elements of $I$ holds, replacing skew-symmetry for these ratios, and $q_{ji}^I$ is inverse to $q_{ij}^I$ if non-zero. By considering the ratios $q_{ij}$, an additional relation appears:
\begin{equation}
q_{ij}^{N\setminus \lbrace i,j\rbrace}q_{jm}^{N\setminus \lbrace j,m\rbrace}=-q_{im}^{N\setminus \lbrace i,m\rbrace}.
\end{equation}
See Section \ref{Rnk} for the list of relations among quasi-Pl\"ucker coordinates. 

For example, in the case $k=2$ and $n=4$, Equation (\ref{quasiplueckerel}) gives the formula
\begin{equation}
q_{13}^2q_{31}^4+q_{14}^2q_{41}^3=1.
\end{equation}
This translates to
\begin{equation}
p_{12}^{-1}p_{32}p_{34}^{-1}p_{14}+p_{12}^{-1}p_{42}p_{43}^{-1}p_{13}=1.
\end{equation}
If the elements $p_{ij}$ commute, this equality reduces to the classical formula (\ref{smallex}).

As a second example, consider the case $k=2$ and $n=6$. Let $M=\lbrace 1,2\rbrace$ and $L=\lbrace 3,4,5\rbrace$. Then, with $i=6$, we obtain the equation 
\begin{align}\begin{split}
q_{63}^{12}q_{36}^{45}+q_{64}^{12}q_{46}^{35}+q_{65}^{12}q_{56}^{34}&=1\\
\Longleftrightarrow\qquad p_{612}^{-1}p_{312}p_{345}^{-1}p_{645}+p_{612}^{-1}p_{412}p_{435}^{-1}p_{635}+p_{612}^{-1}p_{512}p_{543}^{-1}p_{643}&=1.
\end{split}
\end{align}
Assuming that the variables commute, this recovers relation (\ref{k3eqpl1}). Similarly, Equation (\ref{k3eqpl2}) can be recovered using $M=\lbrace 1,2\rbrace$, $L=\lbrace 2,3,4\rbrace$, $i=5$.

Note that it was shown in \cite{GR4}*{Theorem 2.1.6} that any $\op{GL}(n)$-invariant rational function over a free skew-field is a rational function of the quasi-Pl\"ucker coordinates. Moreover, \cite{BR}*{Proposition 2.41} shows that, for $k=2$, quasi-Pl\"ucker coordinates form a free skew-subfield within the free skew-field with $2n$ generators.

\subsection{Quantum Pl\"ucker Coordinates}\label{quasiplueckersect}

A quantum analogue of Equation (\ref{classicalpluecker}) was considered in \cite{TT}*{Eq. (3.2c)} in order to construct a quantum analogue of the coordinate algebra $\cO_{k,n}$ of the Grassmannian $G_{k,n}$. For this, more general exchange relations appear, called \emph{Young symmetry relations}:
\begin{equation}\label{quantumyoung}
\sum_{\Lambda\subseteq I, |\Lambda|=n}(-q)^{-l(I\setminus \Lambda|\Lambda)}f_{I\setminus \Lambda}f_{\Lambda|J},
\end{equation}
for $1\leq r\leq d$, and $I$, $J$ index sets of size $d+r$ and $d-r$, respectively. Here, we use notation adapted from \cite{Lau}*{Eq. (9)}. The classical Pl\"ucker relations (\ref{classicalpluecker}) can be recovered as the case $r=1$, $q=1$. It was further shown in \cite{Lau} that the relations (\ref{quantumyoung}) can be reduced successively to relations with $r=1$. 

In fact, the Young symmetry relations are consequences of the quasi-Pl\"ucker relations (\ref{quasiplueckerel}) \cite{Lau}*{Theorem 28}.

\subsection{Sagbi and Gr\"obner Basis for Coordinates of Grassmannians}

In the commutative setting, maximal minors form a \emph{sagbi basis} (canonical subalgebra basis) according to \cite{Stu}*{3.2.9}.
(The relations among these maximal minors give a quadratic Gr\"obner basis as mentioned in Section \ref{classicalsection}.)

This result was generalized to another approach to quantum Grassmannians, which emerges from geometry and quantum cohomology and is a commutative construction (rather than using $q$-commutators). The coordinate ring, denoted by $\Bbbk[\mathscr{C}_{k,n}^q]$ of the quantum Grassmannian consists of  maximal graded minors (of degree up to $q\in \mathbb{N}$) of $k\times n$-matrices with graded entries. In \cite{SS}*{Theorem 1} it is proved that these maximal graded minors give a sagbi basis for the coordinate ring $\Bbbk[\mathscr{C}_{k,n}^q]$ within the polynomial ring of graded entries of the matrix. Further, the relations among these maximal graded minors  have a quadratic Gr\"obner basis \cite{SS}*{Theorem 2}.

\subsection{This Paper's Approach}

This paper takes the approach to start with a quadratic algebra of quasi-Pl\"ucker coordinates (introduced in Section \ref{Rnk}). This algebra is quadratic-linear, and a theory for Koszulity of such algebras has been developed in \cite{PP}.

Our main result is that the associated quadratic algebra of this algebra has a quadratic Gr\"obner basis. Hence the algebra of non-commutative Pl\"ucker coordinates is a non-homogeneous Koszul algebra (Theorem \ref{RKoszulTheorem}). In Section \ref{limitalgebra} we consider colimits of these algebras, varying $k\geq 2$. We further study  a second version of algebras of quasi-Pl\"ucker coordinates which is not quadratic-linear, but also non-homogeneous Koszul in Section~\ref{limitalgebrakoszul}.

In Section \ref{diffsect} we study the Koszul dual dg algebras explicitly in the case $k=2$, and we finish the exposition by considering an algebra of non-commutative \emph{flag coordinates} which is also non-homogeneous Koszul in Section \ref{flagsect}.

There are different approaches to non-commutative Grassmannian coordinate rings, see e.g. \cites{Kap,KR}, which are not discussed here.

\section{Definition of the Algebra \texorpdfstring{$\RPl{k}$}{Rnk}}\label{Rnk}

We want to define a quadratic algebra of quasi-Pl\"ucker coordinates. As outlined in Section \ref{quasidets}, quasi-Pl\"ucker coordinates were constructed using quasi-determinants in \cite{GR4}*{Section~II}, cf. \cite{GGRW}*{4.3}. For fixed integers $n, k \geq 2$ we define the algebra $\QPl{k}$ as having generators $q_{ij}^I$, where $I\subset \underline{n}$ has size $k-1$ and $i\notin I,$ which satisfy the following relations, obtained from \cite{GGRW}*{4.3}:
\begin{enumerate}
\item[(i)] $q_{ij}^I$ does not depend on the ordering of the elements of $I$;
\item[(ii)] $q_{ij}^I=0$ whenever $j\in I$ and $i\neq j$;
\item[(iii)] $q_{ii}^I=1$, and $q_{ij}^Iq_{jl}^I=q_{il}^I$;
\item[(iv)] $q_{ij}^{N\setminus \lbrace i,j\rbrace}q_{jm}^{N\setminus \lbrace j,m\rbrace}=-q_{im}^{N\setminus \lbrace i,m\rbrace}$.
\item[(v)] If $i\notin M$, then $\sum_{j\in L} q_{ij}^Mq_{ji}^{L\setminus\lbrace j \rbrace}=1$.
\end{enumerate}
Relation (iv) is called \emph{non-commutative skew-symmetry}, and (v) is a non-com\-mu\-ta\-tive analogue of the Pl\"ucker relations.

The algebra $\QPl{2}$ is studied in \cite{BR}, as the algebra of \emph{non-commutative sectors}, where it is denoted by $\mathcal{Q}_n$. Given an $k\times n$-matrix $A$ with entries in a division ring, we note that the description of $q_{ij}^I$ in terms of quasi-determinants in Equation (\ref{quasidetformula}) provides a morphism of algebras from $\QPl{k}$ to the skew-field generated by the non-commutative entries $a_{ij}$ of the matrix $A$.

We also consider the subalgebra $\RPl{k}$ of $\QPl{k}$ generated by those of the $q_{ij}^I$ for which $i<j$. The restriction to $\RPl{k}$ can be justified by noting that the skew-fields generated by the images of $\RPl{k}$ and $\QPl{k}$ in the skew-field generated by the matrix entries $a_{ij}$ coincide. One advantage of considering $\RPl{k}$ is that it admits  a presentation as a quadratic-linear algebra:
\begin{proposition}
The subalgebra $\RPl{k}$ can be described by the relations
\begin{align}\label{relation1}
q_{ij}^I q_{jl}^I&=q_{il}^I, & \forall i<j<l, ~i,j\notin I,\\
\sum_{j=1}^{k-1} q_{l_0l_j}^Mq_{l_jl_{k}}^{L\setminus \lbrace l_j, l_k\rbrace}+q_{l_0l_k}^{L\setminus  \lbrace l_0, l_k\rbrace}&=q_{l_0l_k}^{M},\label{relation2}
\end{align}
where $L=\lbrace l_0<l_1<\dots <l_{k}\rbrace, l_0\notin M$, which are read in the way that $q_{ij}^M=0$ if $j\in M$.
Hence, $\RPl{k}$ is a quadratic-linear algebra.
\end{proposition}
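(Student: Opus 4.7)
The plan is to verify in two steps that (\ref{relation1}) and (\ref{relation2}) both hold in $\RPl{k}$ and together present it.

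First, I would check that these relations are valid among the images of the upward generators in $\QPl{k}$. Relation (\ref{relation1}) is immediate from (iii) restricted to triples $i<j<l$. For (\ref{relation2}), I would apply the Pl\"ucker identity (v) with $i = l_k$ and size-$k$ set $L' = L \setminus \lbrace l_k\rbrace = \lbrace l_0, l_1, \ldots, l_{k-1}\rbrace$; assuming $l_k \notin M$ this gives
\begin{equation*}
\sum_{j=0}^{k-1} q_{l_k l_j}^M\, q_{l_j l_k}^{L\setminus\lbrace l_j, l_k\rbrace} = 1.
\end{equation*}
Multiplying on the left by $q_{l_0 l_k}^M$ and collapsing each $q_{l_0 l_k}^M q_{l_k l_j}^M$ to $q_{l_0 l_j}^M$ via (iii), then separating the $j = 0$ summand (where $q_{l_0 l_0}^M = 1$), produces precisely (\ref{relation2}). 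The boundary case $l_k \in M$ is covered by the convention $q_{l_0 l_k}^M = 0$ combined with an analogous manipulation based on a different specialization of (v).

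Second, I would verify that every relation among the upward $q_{ij}^I$ holding in $\QPl{k}$ is a consequence of (\ref{relation1}) and (\ref{relation2}). The key observation is that the non-commutative skew-symmetry (iv) arises as a degenerate case of (\ref{relation2}): choosing $M = L\setminus\lbrace l_0, l_1\rbrace = \lbrace l_2, \ldots, l_k\rbrace$ kills $q_{l_0 l_j}^M$ for all $j \geq 2$ by (ii) and forces $q_{l_0 l_k}^M = 0$, so (\ref{relation2}) reduces to
\begin{equation*}
q_{l_0 l_1}^{L\setminus\lbrace l_0, l_1\rbrace}\, q_{l_1 l_k}^{L\setminus\lbrace l_1, l_k\rbrace} + q_{l_0 l_k}^{L\setminus\lbrace l_0, l_k\rbrace} = 0,
\end{equation*}
which is exactly (iv) for $N = L$ and $(i, j, m) = (l_0, l_1, l_k)$. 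Likewise, the remaining forms of (v) restricted to upward generators are reproduced from (\ref{relation2}) by appropriate choices of $L$ and $M$.

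The main obstacle will be this second step, and in particular recovering (iv) for an arbitrary increasing triple $i < j < m$ inside any $(k+1)$-subset $N$ of $\underline{n}$, not merely for the extremal triple $(l_0, l_1, l_k)$ inside $L$; this should require chaining multiple instances of (\ref{relation2}) together with (\ref{relation1}). A conceptual route to completeness is to construct, inside any algebra satisfying (\ref{relation1}) and (\ref{relation2}), formal ``downward'' elements $q_{ji}^I$ obeying relations (i)--(v), thereby exhibiting an inverse to the natural surjection onto $\RPl{k}$. Once sufficiency is in hand, the quadratic-linear property is immediate, since (\ref{relation1}) is purely quadratic and (\ref{relation2}) has total degree at most two.
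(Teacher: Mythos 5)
Your first step reproduces the paper's derivation of (\ref{relation2}), but only in the easiest of the three cases the paper treats: specializing (v) at $i=l_k=\max L$ and left-multiplying by $q_{l_0l_k}^M$ is exactly what the paper does when the auxiliary index $i$ of (v) exceeds every element of the index set, and your computation via (iii) is correct there. The paper must (and does) also handle the instances of (v) in which $i$ lies below or strictly inside the size-$k$ set, because the completeness half of the argument needs \emph{every} instance of (v) to collapse onto the single list (\ref{relation2}); in those cases one multiplies by $q_{i,l_k}^{L\setminus l_k}$ on the right (and by $q_{l_1,i}^M$ on the left in the middle case) and uses the skew-symmetry (iv) to contract the downward factors via $q_{l_j i}^{N\setminus\lbrace l_j,i\rbrace}q_{i l_k}^{N\setminus\lbrace i,l_k\rbrace}=-q_{l_j l_k}^{N\setminus\lbrace l_j,l_k\rbrace}$ with $N=L\cup\lbrace i\rbrace$. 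Your unexplained ``analogous manipulation'' for the boundary case $l_k\in M$ is precisely the case $i=l_0$ of this: one cannot specialize (v) at $i=l_k$ when $l_k\in M$, but specializing at $i=l_0$ and right-multiplying by $q_{l_0l_k}^{L\setminus\lbrace l_0,l_k\rbrace}$ works, and it genuinely requires (iv), not only (iii). So this half of your argument is sound in outline but incomplete as written.

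The real gap is the one you flag yourself: completeness. The paper dispatches it in a single sentence, asserting that the upward instances of (iv) are the specializations $M=L\setminus\lbrace l_0,l_j\rbrace$ of (\ref{relation2}); as you correctly observe, this only yields (iv) for triples whose outer entries are $\min L$ and $\max L$ (for instance, with $k=3$, $n=4$, the relation $q_{23}^{14}q_{34}^{12}=-q_{24}^{13}$ is an upward instance of (iv) not of that form). Your worry therefore points at a step where the paper's own proof is no more detailed than your sketch, and your suggested route---building formal downward elements $q_{ji}^I$ inside the abstract algebra on (\ref{relation1})--(\ref{relation2}) so as to invert the natural surjection---is a sensible way to close it, but you do not carry it out. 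As submitted, the proposal establishes that (\ref{relation1})--(\ref{relation2}) hold in $\RPl{k}$, and it correctly isolates, without resolving, the claim that they generate all relations.
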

\begin{proof}
Starting with formula (v), we distinguish three cases, depending on the value of the index $i$. In the case when $l_j<i<l_{j+1}$ for some $j=1, \ldots, k-1$ we obtain Equation (\ref{relation2}) by multiplying with $q_{l_1, i}^M$ on the left, and $q_{i,l_k}^{L\setminus  l_k}$ on the right. If $i<l_1$, it suffices to multiply by $q_{i,l_k}^{L\setminus  l_k}$ on the right; and if $l_k<i$, it is enough to multiply by $q_{l_1, i}^M$ on the left. In all three cases, we obtain the same relation after relabelling so that the index-set $L$ contains $i$ in the correct order. These are all possible relations between generators $q_{ij}^I$ with $i<j$ as in this case Equation (iv) is a special case of Equation (\ref{relation2}), with $M=L\setminus 
\lbrace l_0,l_j\rbrace$ for some $1\leq j\leq k$.
\end{proof}

\begin{example}
Let us consider the case $k=2$. In this case, the skew-symmetry relation (iv) and the Pl\"ucker relation (v) in $\QPl{2}$ become 
\begin{align}
q_{ij}^l q_{jl}^{i}&=-q_{il}^j\\
q_{ij}^m q_{ji}^l + q_{il}^mq_{li}^j&=1,
\end{align}
where all indices are distinct. In this case, the algebra $\RPl{2}$ has $(n-2)\left(\genfrac{}{}{0pt}{}{n}{2}\right)$ generators $q_{ij}^l$ with $i<j$ and $l\neq i,j$. The relations governing this algebra are
\begin{align}
q_{ij}^mq_{jl}^m-q_{il}^m&=0,\\
q_{ij}^mq_{jl}^i+q_{il}^j&=q_{il}^m.
\end{align}
for all $i<j<l$ and $m$ a distinct element from $i,j,l$, and  if $m=l$ we have the relation 
\begin{align}
q_{ij}^lq_{jl}^i+q_{il}^j&=0.
\end{align}
\end{example}


\section{Koszulness of the Algebra \texorpdfstring{$\RPl{k}$}{Rnk}}

The Koszul property for quadratic algebras can, more generally, be studied for non-ho\-mo\-gen\-e\-ous quadratic algebras \cite{PP}*{Chapter~5}. A non-ho\-mo\-gen\-e\-ous quadratic algebra is \emph{Koszul} if the corresponding quadratic algebra $A^{(0)}$ obtained by taking the homogeneous parts of the quadratic relations is Koszul. In this case, $A^{(0)}$ is isomorphic to the associated graded algebra $\gr A$.

We shall prove that such an algebra $A$ is Koszul by showing that the qua\-drat\-ic dual $\left(A^{(0)}\right)^!$ of $A^{(0)}$ is Koszul (cf. \cite{PP}*{Chapter 2, Corollary 3.3}). This, in turn, is proved by showing that $\left(A^{(0)}\right)^!$ has a  quadratic Gr\"obner basis of relations (giving a non-commutative PBW basis for the algebra) using the rewriting method, see e.g. \cite{LV}*{Theorem 4.1.1}.

The associated quadratic algebra $(\RPl{k})^{(0)}$ is generated by the relations 
\begin{align}
q_{ij}^I q_{jl}^I&=0, &\forall i<j<l,~i,j\notin I,\label{qrelation1}\\
\sum_{j=1}^{k-1} q_{l_0l_j}^M q_{l_jl_k}^{L\setminus \lbrace l_j, l_k\rbrace}&=0,\label{qrelation2}
\end{align}
where $L=\lbrace l_0<l_1<\dots <l_{k}\rbrace, l_0\notin M$, which are read in the way that $q_{ij}^M=0$ if $j\in M$.

We consider the quadratic dual of $(\RPl{k})^{(0)}$ which is denoted by $\BPl{k}$. It consists of generators $r_{ij}^I$ for $i<j$ and $i\notin I$, and $r_{ij}^I=0$ if $j\in I$. Again, we regard $I$ as a strictly ordered set of indices.

The following lemma follows by carefully constructing a basis for the orthogonal complement to the subspace of relations (\ref{qrelation1})--(\ref{qrelation2}) in degree two. We will write $i<K<j$ to denote that $i<l<j$ for every element $l\in K$.

\begin{lemma}\label{dualrelations}
The algebra $\BPl{k}$ is given by the relations
\begin{align}\label{dualrelation1}
r_{ij}^I&=0, &\text{ if $j\in I$},\\\label{dualrelation2}
r_{ij}^I r_{ab}^J&=0, &\text{unless }j=a \text{ and }\begin{cases} i\in J\text{ and }i< J\setminus i<b\\\text{or }I=J \end{cases},\\
r_{ij}^I r_{jl}^J&=r_{ij'}^I r_{j'l}^{J\setminus  j'\cup j}, &\forall j'\in J\setminus (J\cap I), \text{ provided }i<J\setminus i<l,\label{dualrelation3}
\end{align}
provided that $i\notin I$, $a\notin J$ for (\ref{dualrelation2}), and $i\notin I$, $j\notin J$ for (\ref{dualrelation3}).
\end{lemma}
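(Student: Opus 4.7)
The plan is to compute $R^\perp \subset V^* \otimes V^*$ explicitly, where $V$ is spanned by the $q_{ij}^I$ and $R$ is the span of the type-1 relations (\ref{qrelation1}) and type-2 relations (\ref{qrelation2}); the degree-two defining relations of $\BPl{k}=T(V^*)/\langle R^\perp\rangle$ are then exactly the elements of $R^\perp$. The crucial structural observation is that every monomial $q_{ij}^I\otimes q_{ab}^J$ appearing in any type-1 or type-2 relation has $j=a$. Hence any $r_{ij}^I\otimes r_{ab}^J$ with $j\ne a$ pairs trivially with all of $R$ and lies in $R^\perp$, which handles the $j\ne a$ case of (\ref{dualrelation2}).

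With $j=a$ fixed, I would identify which relations contain $r_{ij}^I\otimes r_{jb}^J$ in their support. A type-1 relation $q_{ij}^I\otimes q_{jb}^I$ does so iff $I=J$. A type-2 relation indexed by $(L=\{l_0<\dots<l_k\},M)$ with $l_0\notin M$ contributes $r_{l_0,l_s}^M\otimes r_{l_s,l_k}^{L\setminus\{l_s,l_k\}}$ for each $s=1,\dots,k-1$ with $l_s\notin M$; matching forces $(l_0,l_k,M)=(i,b,I)$, $l_s=j$, and $J=L\setminus\{l_s,l_k\}$. Such an $(L,M)$ exists precisely when $i\in J$ (since $l_0=i\in J$) and $i<J\setminus i<b$ (from the ordering $l_0<\dots<l_k$). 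Monomials satisfying neither condition thus lie in $R^\perp$ as single elements, completing (\ref{dualrelation2}).

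To finish, I would determine the linear combinations in $R^\perp$ among the remaining monomials. The \emph{type-1 shape} monomials $r_{ij}^I\otimes r_{jb}^I$ each appear in a unique type-1 relation with coefficient $1$, so they are pairwise independent modulo $R^\perp$ and no further relation is imposed. The \emph{type-2 shape} monomials $r_{ij}^I\otimes r_{jb}^{J_j}$, where $J_j=(S\cup\{i\})\setminus\{j\}$ for a fixed tuple $(i,b,I,S)$ with $i\notin I$, $S\subset(i,b)$, $|S|=k-1$, are linked by the single type-2 constraint $\sum_{j\in S\setminus I}c_{ijI,\,jb\,J_j}=0$; the kernel of one linear equation is spanned by pairwise differences, and using the identity $J_{j'}=J_j\setminus j'\cup j$ one recognizes exactly (\ref{dualrelation3}), with the condition $j'\in J\setminus(J\cap I)$ translating into $j'\in S\setminus I$ (plus the implicit $j'>i$ required for $r_{ij'}^I$ to be a valid generator). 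The main technical hurdle is the index bookkeeping of the preceding paragraph: matching each type-2-shape monomial to a unique pair $(L,M)$ determined by $(i,b,I,J)$, and verifying this bijection is compatible with the structural conditions in (\ref{dualrelation2}) and (\ref{dualrelation3}); once this is pinned down, the rest is elementary linear algebra on the kernel of a single linear equation per equivalence class.
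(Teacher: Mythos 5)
Your proposal is correct and follows exactly the route the paper indicates (the paper only sketches it in one sentence before the lemma): compute the orthogonal complement $R^\perp$ of the span of the relations (\ref{qrelation1})--(\ref{qrelation2}) in degree two, noting that monomials absent from every relation die, the single-monomial relations $q_{ij}^Iq_{jl}^I$ force their duals to survive untouched, and each Pl\"ucker-type relation is met by each of its monomials exactly once so that its orthogonal complement is spanned by pairwise differences, yielding (\ref{dualrelation3}). Your index bookkeeping (uniqueness of $(L,M)$ given the monomial, the conditions $i\in J$ and $i<J\setminus i<b$, and the identification $J_{j'}=J_j\setminus j'\cup j$) checks out.
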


\begin{theorem}\label{RKoszulTheorem}
The algebra $\BPl{k}$ has a quadratic non-commutative Gr\"obner basis, for $k,n\geq 2$, given by the relations (\ref{dualrelation2})--(\ref{dualrelation3}) on generators $r_{ij}^I$, $i,j\notin I$.
\end{theorem}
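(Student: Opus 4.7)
The plan is to verify that the relations (\ref{dualrelation2})--(\ref{dualrelation3}) form a non-commutative quadratic Gr\"obner basis for $\BPl{k}$ via the rewriting (diamond) method of \cite{LV}*{Theorem 4.1.1}. First I would fix a total order on the generators $r_{ij}^I$, for instance the lexicographic order on the tuple $(i,j,I)$ where $I$ is read with its elements in increasing order, and extend it to a degree-then-lex order on words. This choice orients the relations: the monomial equations in (\ref{dualrelation2}) are pure reductions to zero, while in (\ref{dualrelation3}) the equivalence class of a monomial $r_{ij}^I r_{jl}^J$ (over the choices of the middle index) is collapsed onto the canonical representative in which $j$ is the minimum of $\lbrace j\rbrace \cup (J\setminus(J\cap I))$. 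Every other representative is then oriented as a rewriting toward this normal form.

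Next I would compute the leading monomials of the oriented rules: for (\ref{dualrelation2}) the leading monomial is the two-letter word $r_{ij}^I r_{ab}^J$ itself whenever the pair $(i,j,I),(a,b,J)$ fails the concatenation conditions; for (\ref{dualrelation3}) the leading monomial is $r_{ij}^I r_{jl}^J$ whenever $j$ is not the declared minimum of its equivalence class. With the leading monomials in hand, the remaining task is to run through all ambiguous words in degree three and to check that each reduces to a unique normal form.

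A degree-three critical word $r_{\alpha\beta}^A r_{\beta\gamma}^B r_{\gamma\delta}^C$ is one in which both $r_{\alpha\beta}^A r_{\beta\gamma}^B$ and $r_{\beta\gamma}^B r_{\gamma\delta}^C$ are leading monomials. I would enumerate these ambiguities by type: (a) both overlaps are of type (\ref{dualrelation2}), in which case both reductions give zero immediately; (b) one overlap is of type (\ref{dualrelation2}) and the other of type (\ref{dualrelation3}), where the zero-reduction must be shown compatible with first performing a middle-index swap via (\ref{dualrelation3}); and (c) both overlaps are of type (\ref{dualrelation3}), which is the core combinatorial case and amounts to showing that swapping the first middle index $\beta$ and then the second middle index $\gamma$ produces the same normal form as performing the two swaps in the opposite order.

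The main obstacle is case (c): the two consecutive applications of (\ref{dualrelation3}) permute the indices $\beta$ and $\gamma$ into the index sets $A$, $B$, $C$, and one must show that the configuration of index sets resulting from one rewriting path agrees with that produced by the other. I would verify this by giving an explicit description of the joint index configuration after both swaps, exploiting the symmetry of (\ref{dualrelation3}) between the source and target of a swap; and then showing in case (b) that degenerate configurations which produce a type-(\ref{dualrelation2}) zero on one path survive any intermediate (\ref{dualrelation3}) rewrite on the other, which reduces to monitoring how the concatenation conditions of (\ref{dualrelation2}) transform under the swap. Once confluence is verified on all critical triples in degree three, the rewriting method yields the Gr\"obner basis property and the induced PBW basis of normal monomials, as claimed.
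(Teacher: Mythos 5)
Your overall strategy --- orient (\ref{dualrelation2}) as monomial reductions to zero, orient (\ref{dualrelation3}) toward the representative with minimal middle index, and verify confluence by the rewriting method of \cite{LV}*{Theorem 4.1.1} --- is exactly the method the paper uses, down to the same choice of normal form (the paper orders generators by $(i,j)$ lexicographically, then by $I$, and uses \texttt{degrevlex}; your deglex variant orients the relations the same way). One small correction: the canonical middle index should be the minimum of $\lbrace j\rbrace\cup\bigl(J\setminus(J\cap I\cup i)\bigr)$, not of $\lbrace j\rbrace\cup(J\setminus(J\cap I))$ --- since $i\in J$ and $i\notin I$, your set contains $i$, which is always the minimum but is not an admissible middle index.

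The genuine gap is that your cases (b) and (c) are precisely where all of the mathematical content lives, and the proposal stops at announcing that they need to be checked rather than checking them. In particular, in case (b) the danger is concrete: rewriting $r_{ij}^I r_{jl}^J$ to $r_{ij'}^I r_{j'l}^{J\setminus j'\cup j}$ with $j'<j$ \emph{shrinks} the middle index, so a preceding factor $r_{hi}^H$ whose admissibility condition $i<I\setminus h<j$ from (\ref{dualrelation2}) held with $j$ may fail with $j'$; you must rule out that a nonzero normal monomial is thereby sent to zero along one reduction path but not the other. The paper resolves exactly this point with a single observation that substitutes for the full critical-triple enumeration: it always rewrites the \emph{rightmost} non-normal degree-two subword first, and notes that if the rewrite creates a violation of (\ref{dualrelation2}) in the pair immediately to its left (i.e.\ some element of $I_{j-1}$ exceeds the new middle index $i_j'$), then the original monomial was already zero by (\ref{dualrelation2}), so no new obstruction arises. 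To complete your version of the argument you would either need to reproduce this observation inside your case (b), or carry out the explicit index bookkeeping for the double-swap in case (c) showing that the two reduction orders land on the same triple of superscripts $(A,B,C)$; as written, the proposal asserts confluence where it must be proved.
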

Note that for $k\geq n$, $\BPl{k}=0$ is trivial as no subset of size $k+1$ of $\underline{n}$ can be chosen. 
\begin{proof}
In order to prove the theorem, we have to show that there are no obstructions of degree larger than two (see e.g. \cites{Ani, CPU} for the terminology). We chose the following ordering on the generators $q_{ij}^I$. We first order by size of $(i,j)$ lexicographically. Given same subscripts, we order according to the lexicographic order on the superscripts $I$. The monomials are then ordered graded reverse lexicographically (\texttt{degrevlex} order). There are two different types of normal words of degree two:
\begin{align*}
r_{ij}^I r_{jl}^J&, & \text{for }i<j<J\setminus \lbrace J\cap I \cup i\rbrace <l,\\
r_{ij}^Ir_{jl}^I&, &j\notin I,
\end{align*}
with $i\notin I$, $i\in J$.

We claim that a basis for $\BPl{k}$ is given by monomials of the form
\begin{align}\label{monomialbasis}
r_{i_0,i_1}^{I_0}r_{i_1,i_2}^{I_1}\cdot \ldots \cdot r_{i_{t-1},i_t}^{I_{t-1}},
\end{align}
with $i_0<i_1<\ldots <i_t$, where for each $j=1,\ldots, t-1$ we have  for $r_{i_{j-1},i_{j}}^{I_{j-1}}r_{i_j,i_{j+1}}^{I_j}$ either 
\begin{align}\label{situation1}
&I_{j-1}=I_j,\quad\text{ or }\\\label{situation2}
&i_{j-1}\in I_j \quad \text{ and }\quad i_j<I_j\setminus\lbrace I_j\cap  I_{j-1}\cup i_{j-1}\rbrace<i_{j+1}.
\end{align}
To prove this, we note in an arbitrary non-zero monomial $M$ we might have degree two sub-word of the form $r_{i_{j-1}, i_j}^{I_{j-1}}r_{i_j,i_{j+1}}^{I_j}$ where $i_j$ is not necessarily smaller than all elements in $I_{j}\setminus \lbrace I_{j}\cap I_{j-1}\cup i_{j-1}\rbrace$. In this case, we can replace the sub-word by $r_{i_{j-1}, i_j'}^{I_{j-1}}r_{i_j',i_{j+1}}^{I_j}$, where $i_j'$ is smaller than all elements in $I_{j}\setminus \lbrace I_{j}\cap I_{j-1}\cup i_{j-1}\rbrace$ using relation (\ref{dualrelation3}). Assume that $j$ corresponds to right-most occurrence of such a degree two sub-word. If now $r_{i_{j-2}, i_{j-1}}^{I_{j-2}}r_{i_{j-1},i_{j}}^{I_{j-1}}$ is of the same form, but there exists an element of $I_{j-1}$ which is larger than $i_j'$, the monomial $M$ was zero by relation (\ref{dualrelation2}). Hence, such a situation cannot occur and by replacing all the non-normal degree two sub-words of $M$ we obtain that $M$ equals a monomial of the form (\ref{monomialbasis}).

It is now clear by the description of the monomial basis in (\ref{monomialbasis}) that the quadratic relations given in Lemma \ref{dualrelations} give a non-commutative Gr\"obner basis (non-commutative PBW basis) for the algebra $\BPl{k}$.
\end{proof}

We note, in particular, that $\BPl{k}$ is a monomial algebra if and only if $k=2$.

\begin{corollary}
The algebra $\BPl{k} $is Koszul, and hence the algebra $\RPl{k}$ is non-homogeneous Koszul for all  $k,n\geq 2$.
\end{corollary}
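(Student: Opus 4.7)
The plan is to chain three standard results, each applied to structures just constructed.

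First, I would invoke a version of Priddy's theorem in the form appropriate to non-commutative algebras: a connected graded quadratic algebra whose defining ideal admits a quadratic non-commutative Gr\"obner basis (equivalently, a non-commutative PBW basis) is Koszul. This is, for instance, \cite{PP}*{Chapter~4, Theorem~3.1} or \cite{LV}*{Theorem~4.3.1}. Theorem~\ref{RKoszulTheorem} has just exhibited such a basis for $\BPl{k}$, so Koszulity of $\BPl{k}$ is immediate.

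Second, I would apply Koszul duality for quadratic algebras \cite{PP}*{Chapter~2, Corollary~3.3}, which states that a quadratic algebra is Koszul if and only if its quadratic dual is Koszul. By the construction in Section~\ref{Rnk}, $\BPl{k}$ is precisely the quadratic dual of the associated quadratic algebra $(\RPl{k})^{(0)}$; hence $(\RPl{k})^{(0)}$ is Koszul.

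Finally, I would invoke the definition of non-homogeneous Koszulity recalled at the beginning of Section~3 (following \cite{PP}*{Chapter~5}): the quadratic-linear algebra $\RPl{k}$ is non-homogeneous Koszul exactly when its associated graded algebra, isomorphic to $(\RPl{k})^{(0)}$, is Koszul in the ordinary graded sense. This finishes the corollary. The substantive obstacle, the combinatorial verification that no higher-degree obstructions appear when reducing monomials in $\BPl{k}$, was already overcome in Theorem~\ref{RKoszulTheorem}; what remains here is the composition of three named results and no fresh calculations.
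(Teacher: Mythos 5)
Your proposal is correct and matches the paper's intended argument exactly: the paper states this strategy (quadratic Gr\"obner basis $\Rightarrow$ Koszul, quadratic duality, and the definition of non-homogeneous Koszulity via $(\RPl{k})^{(0)}$) at the start of Section~3 and gives no separate proof of the corollary, since all the work is in Theorem~\ref{RKoszulTheorem}.
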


\begin{example}$~$\begin{enumerate}
\item[(i)]
Consider $\BPl{2}$ for small values of $n$. The algebra $B_3^{(2)}$  has a basis given by
$$1<r_{12}^3<r_{13}^2<r_{23}^1< r_{12}^3 r_{23}^1,$$
so the Hilbert series are
\begin{align}
H(B_3^{(2)},t)&=1+3t+t^2,\\
\begin{split}
H(R_3^{(2)},t)&=(1-3t+t^2)^{-1}\\&
=1+3t+8t^2+21t^3+55t^4+144t^5+O(t^6).
\end{split}
\end{align}
According to \cites{Ani2,Ani} (see \cite{CPU}*{Theorem 7.1}), this implies that the global dimension of $R_3^{(2)}$ equals two. The $n$-th coefficient of $H(R_3^{(2)},t)$ is the $2(n-1)$-th Fibonacci number.\footnote{According to the \emph{On-Line Encyclopaedia of Integer Sequence}\textregistered , \url{https://oeis.org/A001906}.}
\item[(ii)] The Hilbert series $k=2$ and $n=4$ is
\begin{align}
\begin{split}
H(R_4^{(2)},t)&=(1-12t+12t^2-5t^3)^{-1}\\&=1+12t+132t^2+1,445t^3+O(t^4).
\end{split}
\end{align}
\item[(iii)] The Hilbert series $k=2$ and $n=5$ is
\begin{align}\begin{split}
H(R_5^{(2)},t)&=(1-30t+50t^2-45t^3+17t^4)^{-1}
\\&=1 + 30 t + 850 t^2 + 24,045 t^3 + 680,183 t^4 +O(t^5).
\end{split}
\end{align}
\end{enumerate}
\end{example}

In general, the top degree of $H(\BPl{2},t)$ is $n-1$. The leading coefficient $h_{n-1}$ is given by
\begin{equation}\label{count1}
h_{n-1}=\sum_{i=3}^n (n-i+1)2^{n-i}=\sum_{i=0}^{n-3}(i+1)2^i,
\end{equation}
while the coefficient $h_1=\tfrac{1}{2}n(n-1)(n-2)$. The other coefficients can be computed as
\begin{equation}\label{count2}
h_{n-l}=\binom{n}{l-1}\left( l-1+\sum_{i=0}^{n-l-2} (i+l)2^{i}\right),
\end{equation}
for $1\leq l\leq n-1$. This can be seen by systematically counting normal words in the algebra $\BPl{2}$. Note that in top degree, these are of the form
\begin{align*}
r_{12}^{j_1}r_{23}^{j_2}\ldots r_{n-1,n}^{j_{n-1}},
\end{align*}
where for each $i=2,\ldots, n-1$ we can either have $j_i=j_{i-1}$ or $j_i=i-1$. In Equation (\ref{count1}) we count such monomials where $j_1=\ldots=j_{i-2}$ for $i=3,\ldots,n$ separately. Using the same counting method for an arbitrary ordered subset of size $n-l$ in $\underline{n}$,  Equation (\ref{count2}) follows.

\begin{example}
If $k=3$ and $n=4$, then $r_{12}^{\lbrace 3,4\rbrace}r_{24}^{\lbrace 1,3\rbrace}$ is the only non-zero quadratic monomial in $B_4^{(3)}$, and hence
\begin{align}
\begin{split}
H(R_4^{(3)},t)&=(1-6t+1t^2)^{-1}\\
&=1 + 6 t + 35 t^2 + 204 t^3 + 1,189 t^4 +  O(t^5),
\end{split}
\end{align}
for which the coefficients satisfy the recursion $a_n=6a_{n-1} - a_{n-2}$, with $a_0=1$.

In general, we find that $H(R_n^{(n-1)},t)=(1-\tfrac{1}{2}n(n-1)t+t^2)^{-1}$ since the monomial $r_{12}^{\underline{n}\setminus\lbrace 1,2\rbrace}r_{2n}^{{\underline{n}\setminus\lbrace 1,n\rbrace}}$ is the only non-zero quadratic monomial in $B_n^{(n-1)}$, and hence the coefficients of this Hilbert series satisfy the recursion $a_n=\tfrac{1}{2}n(n-1)a_{n-1} - a_{n-2}$.
\end{example}


\section{Koszulness of the Colimit Algebra \texorpdfstring{$R_n$}{Rn}}\label{limitalgebra}

The relations in \cite{GGRW}*{4.8.1} link the quasi-Pl\"ucker coordinates for $k\times n$-matrices with those of $(k-1)\times n$-matrices. In our algebraic setting, this gives the non-homogeneous relations
\begin{align}\label{extrarelation}
q_{ij}^J&=q_{ij}^{J\cup m}+ q_{im}^J\cdot q_{mj}^{J\cup i},
\end{align}
where $i,m\notin J$. This relation links $\QPl{k+1}$ and $\QPl{k}$, where $J$ is a set of size $k-1$. We can inductively define the quadratic algebra $\QPl{\leq k}$ as the coproduct of the algebras $\QPl{k'}$, for $k'\leq k$, with the additional relations of the form (\ref{extrarelation}). Accordingly, we define the \emph{algebra of quasi-Pl\"ucker coordinates} $Q_n$
\[
Q_n:=\QPl{\leq n}.
\]
Note that $\QPl{k}=0$ for $k\geq n$ as then it is not possible to choose an index set of size $k+1$ in $\underline{n}$. The colimit algebra $Q_n$ is again a quadratic-linear algebra with finitely many generators.

\begin{lemma}
The subalgebra $\RPl{\leq k}$ of $\QPl{\leq k}$ generated by $q_{ij}^J$ with $i<j$ can be described as the quotient of the colimit over the subalgebras $\RPl{k}$ together with the relations (\ref{extrarelation}) for $i<m<j$.
\end{lemma}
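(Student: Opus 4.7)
The plan is to verify the claimed presentation by exhibiting the natural surjection from the quotient onto $\RPl{\leq k}$ and then arguing that it is injective. Surjectivity is immediate, since both algebras share the generating set $\{q_{ij}^J \mid i<j\}$ and the listed relations hold in $\QPl{\leq k}$ and hence in $\RPl{\leq k}$. For injectivity I would show that every relation of $\QPl{\leq k}$ expressible purely in terms of these generators is a consequence of the colimit relations together with the instances of (\ref{extrarelation}) for $i<m<j$.

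The relations internal to each $\QPl{k'}$ are already handled by the proposition in Section \ref{Rnk}: after left and right multiplication by elements of $\RPl{k'}$, they reduce to (\ref{relation1}) and (\ref{relation2}), and so are absorbed into the colimit. What remains is to analyze the cross-level relation (\ref{extrarelation}) in the three possible positions of $m$ relative to $i<j$.

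The case $i<m<j$ is the one retained in the presentation. In the case $m<i<j$ the factor $q_{im}^J$ is not a generator of $\RPl{\leq k}$; left-multiplying (\ref{extrarelation}) by $q_{mi}^J \in \RPl{\leq k}$ and applying the two instances $q_{mi}^J q_{ij}^J = q_{mj}^J$ and $q_{mi}^J q_{im}^J = 1$ of relation (iii) produces (\ref{extrarelation}) for the relabeled triple $(m,i,j)$, which is already in the retained pattern $i'<m'<j'$. For the case $i<j<m$ the factor $q_{mj}^{J\cup i}$ fails to lie in $\RPl{\leq k}$, so I would right-multiply (\ref{extrarelation}) by $q_{jm}^{J\cup i}$ and combine the outcome with the retained instance of (\ref{extrarelation}) applied to the triple $(i,j,m)$ in order to derive the identity $q_{ij}^{J\cup m}\, q_{jm}^{J\cup i} = -\, q_{im}^{J\cup j}$. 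This is precisely the non-commutative skew-symmetry relation (iv) inside $\QPl{k+1}$, which by the proof of the earlier proposition is a special case of (\ref{relation2}) in $\RPl{k+1}$ and hence already present in the colimit.

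The main obstacle will be the case $i<j<m$: unlike $m<i<j$, the reduction does not stay within a single $\QPl{k'}$ but produces an identity that lives in $\QPl{k+1}$, so one must track carefully which layer of the colimit absorbs the derived relation. Beyond this bookkeeping, the argument proceeds in the same spirit as the proof of the proposition in Section \ref{Rnk}: multiply by suitable generators of the subalgebra to eliminate the factors lying outside $\RPl{\leq k}$, and read off the resulting identity in the retained form.
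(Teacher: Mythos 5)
Your argument is correct and follows essentially the same route as the paper's (very terse) proof: a case analysis on the relative order of $i,m,j$, multiplying relation (\ref{extrarelation}) by the appropriate inverse generator to return to the retained pattern $i<m<j$. Your treatment of the case $i<j<m$ --- where right-multiplication by $q_{jm}^{J\cup i}$ combined with the retained instance for $(i,j,m)$ yields the skew-symmetry relation (iv), already absorbed into the colimit as a special case of (\ref{relation2}) --- makes explicit a step the paper leaves implicit, and your computations check out.
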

\begin{proof}
In the larger algebra $\QPl{\leq k}$, all relations of the form (\ref{extrarelation}) can be transformed into relations of the same form where the lower indices are in strictly increasing order. This can be checked distinguishing cases depending on the order of $\lbrace i,m,j\rbrace$ according to size, and multiplying by the correct inverse. Hence all the relations in the subalgebra $\RPl{\leq k}$ are of the same form.
\end{proof}

Therefore, we define the  colimit algebra
\[
R_n:=\RPl{\leq n}.
\]

\begin{theorem}
The algebras $\RPl{\leq k}$ are quadratic-linear Koszul algebras, and hence the qua\-drat\-ic-linear algebra $R_n$ is Koszul.
\end{theorem}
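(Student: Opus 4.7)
The plan is to repeat the strategy of Theorem~\ref{RKoszulTheorem} in the colimit setting. By the non-homogeneous Koszul criterion recalled at the start of Section~3, it suffices to prove that the associated homogeneous quadratic algebra $(\RPl{\leq k})^{(0)}$ is Koszul, which I would do by exhibiting a quadratic Gröbner basis for the quadratic dual algebra $B_n^{(\leq k)} := \bigl((\RPl{\leq k})^{(0)}\bigr)^!$. Since $\RPl{k'}=0$ for $k' \geq n$, the colimit stabilizes and $R_n = \RPl{\leq n}$, so the statement for $R_n$ follows immediately from the finite case.

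First I would write down $(\RPl{\leq k})^{(0)}$ explicitly: its generators are $\{q_{ij}^I : i<j,\ i\notin I,\ |I|\leq k-1\}$ (the union of the generators of the $(\RPl{k'})^{(0)}$ for $k'\leq k$), and its defining relations are those of (\ref{qrelation1})--(\ref{qrelation2}) at each level $k'$, augmented by the \emph{cross-level} quadratic relations $q_{im}^J q_{mj}^{J \cup i}=0$ obtained as the homogeneous parts of the relations (\ref{extrarelation}) after reordering so that lower indices increase. Next I would compute the quadratic dual $B_n^{(\leq k)}$: its generators are $r_{ij}^I$ for $|I|\leq k-1$, and its relations extend Lemma~\ref{dualrelations} by allowing certain products $r_{ij}^I r_{jl}^{J}$ with $|I|\neq |J|$ to survive, dictated by orthogonality to the cross-level relations. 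Concretely, I expect the product $r_{ij}^I r_{jl}^J$ (with $|J|=|I|+1$) to be non-zero exactly when $J=I\cup \{i\}$ (and similarly with indices exchanged for the other direction of level change).

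With this description in hand, I would extend the monomial order of Theorem~\ref{RKoszulTheorem} by ordering generators first according to the level $|I|$ and then by the old \texttt{degrevlex} rule. The candidate PBW basis should consist of chains $r_{i_0,i_1}^{I_0}r_{i_1,i_2}^{I_1}\cdots r_{i_{t-1},i_t}^{I_{t-1}}$ with $i_0<i_1<\ldots<i_t$, where each adjacent pair $(I_{j-1},I_j)$ satisfies either one of the same-level compatibility conditions (\ref{situation1})--(\ref{situation2}) or one of the new cross-level conditions of type $I_j=I_{j-1}\cup\{i_{j-1}\}$. Proving that these normal monomials span $B_n^{(\leq k)}$ and are linearly independent then reduces, by the rewriting method \cite{LV}*{Theorem~4.1.1}, to checking that all quadratic overlaps (S-polynomials) resolve without producing higher-degree obstructions.

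The main obstacle will be this last verification. Overlaps between two same-level leading terms are handled by Theorem~\ref{RKoszulTheorem}, and overlaps between two cross-level leading terms are easily checked because the cross-level relations are monomial. The delicate case is a cubic ambiguity $r_{ij}^I r_{jl}^J r_{lm}^K$ where one of the two quadratic factors is same-level and the other is cross-level; here one must show that rewriting from the left and from the right yields the same normal form. I expect this to follow from the combinatorial compatibility built into (\ref{extrarelation}), essentially because the superscript-change pattern $J\mapsto J\cup\{i\}$ commutes with the superscript-preserving and superscript-substitution rules (\ref{dualrelation2})--(\ref{dualrelation3}) already in use. Once all such overlaps are resolved, $B_n^{(\leq k)}$ has a quadratic Gröbner basis, hence is Koszul, hence $(\RPl{\leq k})^{(0)}$ is Koszul (cf.~\cite{PP}*{Chapter~2, Corollary~3.3}), and $\RPl{\leq k}$ is non-homogeneous Koszul as a quadratic-linear algebra. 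Setting $k=n$ completes the proof for $R_n$.
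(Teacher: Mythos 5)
Your proposal follows essentially the same route as the paper: pass to the quadratic dual of $(\RPl{\leq k})^{(0)}$, observe that the only surviving cross-level products are $r_{ij}^{J}r_{jl}^{J\cup i}$, extend the generator order by the size of the superscript, and take as PBW basis the chains of same-level normal monomials linked by these level-raising steps. One small caution: since relation (\ref{extrarelation}) only produces quadratic parts whose superscript grows by one element, there are no surviving products in ``the other direction of level change,'' so that parenthetical should be dropped; otherwise the argument matches the paper's, with your explicit attention to the mixed same-level/cross-level overlaps being a welcome addition the paper leaves implicit.
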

\begin{proof}
The quadratic part of the relation (\ref{extrarelation}) gives that
\begin{align}
q_{ij}^Jq_{jm}^{J\cup{i}}&=0, &\forall i<j<m,
\end{align}
where $i,j\notin J$. Consider the quadratic dual $\BPl{\leq k}$ of $(\RPl{\leq k})^{(0)}$. In this algebras, all products of generators $r_{ij}^Jr_{ab}^K$ with different sizes of the index sets $J,K$ are zero unless $j=a$ and $K=J\cup  i$. 
We extend the linear ordering on generators by requiring that $q_{ij}^I<q_{kl}^K$ if $|K|<|L|$, again using the \texttt{degrevlex} ordering on monomials.
Then a non-commutative PBW basis is given by products $M_{1}M_{2}\ldots M_{s}$ of monomials of the form from (\ref{monomialbasis}) which only give a non-zero product  if the last generator in $M_t$ is of the form $r_{ij}^K$, and the first generator of $M_{t+1}$ has the form $r_{jl}^{K\cup i}$. This shows that a quadratic non-commutative Gr\"obner basis exists for $\BPl{\leq k}$. In particular, $(\RPl{\leq k})^{(0)}$ is Koszul, and so $\RPl{\leq k}$ is non-homogeneous Koszul.
\end{proof}

\begin{example}
Consider the algebra $Q_4$. The quadratic dual  has the PBW basis
\begin{gather*}r_{12}^3<r_{12}^4<r_{13}^2<r_{13}^4<r_{14}^2<r_{14}^3<r_{23}^1<r_{23}^4<r_{24}^1<r_{24}^3<r_{34}^1<r_{34}^2
\\<r_{12}^{34}<r_{13}^{24}<r_{14}^{23}<r_{23}^{14}<r_{24}^{13}<r_{34}^{12}\\
<r_{12}^3r_{23}^1 <r_{12}^3r_{24}^1 <r_{12}^3r_{24}^3 <r_{12}^3r_{24}^{13}
 <r_{12}^4r_{23}^1 <r_{12}^4r_{23}^4 <r_{12}^4r_{24}^1 <r_{12}^4r_{23}^{14} \\ <r_{13}^2r_{34}^1<r_{13}^2r_{34}^2<r_{13}^2r_{34}^{12}<r_{13}^4r_{34}^1<r_{23}^1r_{34}^1<r_{23}^1r_{34}^2<r_{23}^1r_{34}^{12}<r_{23}^4r_{34}^2\\
<r_{12}^3r_{23}^1r_{34}^1<r_{12}^3r_{23}^1r_{34}^2<r_{12}^3r_{23}^1r_{34}^{12}<r_{12}^4r_{23}^1r_{34}^1\\<r_{12}^4r_{23}^1r_{34}^2<r_{12}^4r_{23}^1r_{34}^{12}<r_{12}^4r_{23}^4r_{34}^2.
\end{gather*}
Hence the Hilbert series for $Q_4$ is given by
\begin{equation}
\begin{split}
H(Q_4,t)&=(1-18t+16t^2-7t^3)^{-1}\\&=1 + 18 t + 308 t^2 + 5,263 t^3 + 89,932 t^4 +  O(t^5)
\end{split}
\end{equation}
\end{example}


\section{The Algebras \texorpdfstring{$\QPl{k}$}{Qn(k)} and \texorpdfstring{$Q_n$}{Qn} are also Koszul}\label{limitalgebrakoszul}

The non-homogeneous quadratic algebras $\QPl{k}$ can also be shown to be Koszul. However, it is not quadratic-linear, as constant terms appear in the relations (cf. \cite{PP}*{Chapter~5}). We change the presentation from Section \ref{Rnk} slightly:

\begin{lemma}
The algebra $\QPl{k}$ has generators $q_{ij}^I$, where $|I|=k-1$ and $i\notin I,$ subject to the relations
\begin{enumerate}
\item[(i)] $q_{ij}^I$ does not depend on the ordering of the elements of $I$;
\item[(ii)] $q_{ij}^I=0$ whenever $j\in I$;
\item[(iii)] $q_{ii}^I=1$, and $q_{ij}^Iq_{jl}^I=q_{il}^I$, $i,j\notin I$;
\item[(v')] If $i\notin M$, $i\in L$, then $\sum_{j\in L\setminus \lbrace i\rbrace} q_{ij}^Mq_{jl}^{L\setminus\lbrace j,l \rbrace}+q_{il}^{L\setminus\lbrace i,l \rbrace}=0$.
\end{enumerate}
\end{lemma}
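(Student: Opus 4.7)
The plan is to verify that the new presentation defines the same algebra $\QPl{k}$ as the one given in Section~\ref{Rnk}. Since relations (i), (ii), and (iii) appear in both presentations, it suffices to show that, modulo (i)--(iii), the single relation (v') is equivalent to the pair of relations (iv) and (v) from Section~\ref{Rnk}.

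For the direction (v')~$\Rightarrow$~(iv), I would specialise (v') by taking $L = N$ of size $k+1$, $M = N \setminus \{i, j\}$ of size $k-1$, and $l = m$, for any triple of distinct elements $i, j, m \in N$. Every $j' \in L \setminus i = M \cup \{j\}$ satisfies either $j' = j$ or $j' \in M$; in the latter case $q_{ij'}^{M} = 0$ by (ii), so the sum collapses to the single surviving contribution $q_{ij}^{N\setminus\{i,j\}}\, q_{jm}^{N\setminus\{j,m\}}$. Combined with the constant term $q_{im}^{N\setminus\{i, m\}}$, this is precisely (iv).

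For (v')~$\Rightarrow$~(v), I would take $L$ of size $k+1$ with $i \in L$ and set $l = i$. Invoking (iii) gives $q_{il}^{L\setminus\{i, l\}} = q_{ii}^{L\setminus i} = 1$, and each summand reduces to $q_{ij}^{M}\, q_{ji}^{L\setminus\{i, j\}}$. Writing $L' = L \setminus i$ of size $k$, the relation reads $\sum_{j \in L'} q_{ij}^{M}\, q_{ji}^{L' \setminus j} = -1$, which matches (v) up to the ambient sign convention on the generators.

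Conversely, to recover (v') from (iv) and (v), I would adapt the method used in the proof of the earlier proposition in Section~\ref{Rnk} that derived Equation~(\ref{relation2}) from (v), namely by multiplying (v) with appropriate quasi-Pl\"ucker generators on the left and on the right and then invoking (iv) to recombine the products. Keeping the target index $l$ generic rather than specialising to the largest element of $L$ produces exactly the form of (v'). I expect the main obstacle to be the careful bookkeeping of signs and the treatment of the formally degenerate term with $j = l$ in the sum, whose superscript $L\setminus l$ falls outside the standard generator range $|I| = k-1$ and must be read via an extension of (iii); in the two specialisations above these terms vanish automatically by (ii), so no issue arises there, but in the general converse direction some convention has to be fixed to make the correspondence exact.
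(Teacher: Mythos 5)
The paper states this lemma without proof, so there is no argument to compare against directly; the closest model is the proof of the Proposition in Section~\ref{Rnk}. Your overall strategy --- showing that, modulo (i)--(iii), relation (v$'$) is equivalent to the pair (iv), (v) --- is the right one, and your derivation of (iv) from (v$'$) is correct: with $M=N\setminus\{i,j\}$ all terms $j'\in M$ (including the potentially degenerate $j'=l=m$) die by (ii), leaving exactly (iv).

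However, your recovery of (v) has a genuine problem. Setting $l=i$ is not a legitimate specialisation: for the superscripts in (v$'$) to have size $k-1$ one needs $l\in L$ and $l\neq i$, and even if one extends (iii) to read $q_{ii}^{L\setminus i}=1$, your computation yields $\sum_{j\in L'}q_{ij}^M q_{ji}^{L'\setminus j}=-1$, whereas (v) asserts the sum equals $+1$. There is no "ambient sign convention" in the paper that absorbs this; the discrepancy is the same one you flag for the $j=l$ term in general, and it is the crux of the lemma rather than bookkeeping. Concretely, combining (iv) and (v) as in the proof in Section~\ref{Rnk} gives Equation~(\ref{relation2}), i.e. $\sum_{j\in L\setminus\{i,l\}}q_{ij}^Mq_{jl}^{L\setminus\{j,l\}}+q_{il}^{L\setminus\{i,l\}}=q_{il}^M$, so for (v$'$) to be equivalent the $j=l$ summand must contribute $-q_{il}^M$; under the naive reading $q_{ll}^{\ast}=1$ it contributes $+q_{il}^M$, and the equivalence fails. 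You need to fix the interpretation of that term explicitly (e.g.\ exclude $j=l$ from the sum and carry $-q_{il}^M$ separately) before either direction of the equivalence goes through. Finally, the converse direction should not stop at Equation~(\ref{relation2}): to get back to (v) one must right-multiply by $q_{li}^{L\setminus\{i,l\}}$ and use $q_{il}^{I}q_{li}^{I}=1$ from (iii) to cancel the common right factor; this invertibility step is essential and is missing from your sketch.
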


\begin{theorem}\label{mainthm}
The non-homogeneous quadratic algebras $\QPl{k}$ and $\QPl{\leq k}$ (and hence, in particular, $Q_n$) are non-homogeneous Koszul.
\end{theorem}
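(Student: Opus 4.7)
The strategy is to adapt the proof of Theorem \ref{RKoszulTheorem} to the larger generating set of $\QPl{k}$. First I would identify the quadratic part $(\QPl{k})^{(0)}$: the generators are $q_{ij}^I$ for all ordered pairs with $i \notin I$, and the defining relations are the degree-two components of (iii) and (v'). A new feature compared to $\RPl{k}$ is that the scalar identity $q_{ij}^I q_{ji}^I = 1$ (the special case $i = l$ of (iii)) contributes a monomial relation $q_{ij}^I q_{ji}^I = 0$ in the quadratic part, on top of the familiar $q_{ij}^I q_{jl}^I = 0$; meanwhile (v') loses its linear tail to become a purely quadratic Plücker-type relation.

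Next I would compute the quadratic dual $\CPl{k} := ((\QPl{k})^{(0)})^!$, establishing an analogue of Lemma \ref{dualrelations} in which the generators $r_{ij}^I$ are indexed by all ordered pairs $(i,j)$ with $j \notin I$, and the orthogonality rule (\ref{dualrelation2}) is adjusted so that products of the form $r_{ij}^I r_{ji}^J$ for compatible $J$ are no longer forbidden. Extending the monomial order used in Theorem \ref{RKoszulTheorem} by ordering generators lexicographically on $(i,j)$ without the restriction $i<j$, the same rewriting argument should verify that the resulting quadratic relations form a non-commutative Gröbner basis for $\CPl{k}$, with normal forms given by chains as in (\ref{monomialbasis}) satisfying the local compatibility conditions analogous to (\ref{situation1})--(\ref{situation2}) at every junction. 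This establishes that $(\QPl{k})^{(0)}$ is Koszul.

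To upgrade Koszulity of the quadratic part to non-homogeneous Koszulity of $\QPl{k}$, I would apply the PBW criterion of Polishchuk-Positselski \cite{PP}*{Chapter 5}: it suffices to verify that the natural surjection $(\QPl{k})^{(0)} \twoheadrightarrow \gr \QPl{k}$ is an isomorphism, equivalently that the Jacobi-type self-consistency conditions on the linear and scalar parts of the non-homogeneous relations hold. The main obstacle is exactly this step: one must check that no new degree-three relation is forced by the interactions of (iii) and (v') at the level of common overlaps (S-polynomials). I expect this to reduce to a finite, bookkeeping check organized by the cardinality and overlap structure of the index sets, and conceptually the consistency is forced because (iii) and (v') are universal identities satisfied by quasi-Plücker coordinates of a generic matrix over a free skew-field. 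Finally, $\QPl{\leq k}$ and the colimit $Q_n$ follow by combining this argument with the treatment of the additional non-homogeneous relations (\ref{extrarelation}), exactly as in the proof of the corresponding result for $\RPl{\leq k}$ in Section \ref{limitalgebra}.
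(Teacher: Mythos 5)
Your proposal takes essentially the same route as the paper: pass to the quadratic dual $\CPl{k}$ of $(\QPl{k})^{(0)}$ (noting the new purely quadratic relations coming from $q_{ij}^Iq_{ji}^I=1$ and the truncated Pl\"ucker relations), exhibit a rewriting rule on degree-two monomials, verify confluence of the critical triples to obtain a quadratic Gr\"obner basis, and then treat $\QPl{\leq k}$ and $Q_n$ by incorporating the relations (\ref{extrarelation}) as in Section \ref{limitalgebra}. The one divergence is your final paragraph: the self-consistency/PBW check you single out as the main obstacle is not needed for the theorem as stated, since the paper's definition of non-homogeneous Koszulity requires only that $(\QPl{k})^{(0)}$ be Koszul (and the paper accordingly does not perform that check), though you are right that it would be required to conclude $\gr\QPl{k}\cong(\QPl{k})^{(0)}$ in the full Polishchuk--Positselski framework.
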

\begin{proof}
The proof is similar to that for $\RPl{k}$ in Theorem~\ref{RKoszulTheorem}, but there are less restrictions of the order of indices.
We consider $\CPl{k}$, which is the quadratic dual of the associated quadratic algebra $(\QPl{k})^{(0)}$ from \cite{PP}*{Section 4.1}. Denote generators for this algebra by $r_{ij}^K$ (dual to $q_{ij}^K$). Then $r_{ij}^Kr_{ab}^L=0$ if $j\neq a$ or, if $j=a$, $r_{ij}^Kr_{jb}^L=0$ if $K\neq L$ and $i\notin L$. The relations in $\CPl{k}$ are fully described by
\begin{align}
r_{ij}^Mr_{jl}^{L\setminus \lbrace j,l\rbrace}&=r_{ij'}^Mr_{j'l}^{L\setminus \lbrace j',l\rbrace}, &\forall j,j'\in L\setminus\lbrace L\cap M\cup i\rbrace,
\end{align}
requiring distinct sub-indices and $i\in L$.
This means we have a rewriting rule
\begin{align*}
r_{ij}^Mr_{jl}^{L}&\mapsto r_{il_{L\setminus M}}^Mr_{l_{L\setminus M}l}^{L'},
\end{align*}
where $l_{L\setminus M}=\min L\setminus (L\cap M\cup i)$, and $L'=(L\setminus l_{L\setminus M}  )\cup j$.
One checks that for a critical triple $r_{ij}^Ir_{jl}^Jr_{lm}^L$, applying the rewriting rule to the first two generators and then to the last two generators gives a reduced monomial, and the same reduced monomial emerges if we apply the rewriting rules in opposite order. Hence, the process of applying rewriting rules stabilizes after two steps. This means every critical pair is confluent, and hence the algebra is Koszul (cf. e.g. \cite{LV}*{Section 4.1} for these general results and terminology). This implies that $\QPl{k}$ is non-homogeneous Koszul. Further, after adding relation (\ref{extrarelation}), the same is true. Moreover, the process of passing to the quadratic part of relations still commutes with taking the coproduct, and hence the algebras $\QPl{\leq k}$ are also non-homogeneous Koszul.
\end{proof}

A consequence of Theorem \ref{mainthm} is that it gives a non-homogeneous PBW basis for $\QPl{k}$, cf. \cite{PP}*{Sections 4.4, 5.2}.  Note that an alternative approach to finding a basis for an algebra of quasi-Pl\"ucker coordinates was given in \cite{Lau2}*{Section 7.4}.


\section{Differential Gradings on the Quadratic Duals}\label{diffsect}

Using the non-homogeneous quadratic duality of \cite{PP}*{5.4}, it follows that the algebras $\BPl{k}$ are \emph{differentially graded (dg) algebras}. That is, for each of these algebras, there exist a graded map $\der$ of degree one such that
\begin{align}
\der^2=0, && \der(xy)=\der(x)y+(-1)^{\deg x} x \der(y),
\end{align}
for homogeneous $x$, which is referred to as the \emph{differential}. We study the case $k=2$ in more detail and relate it to certain refinements of triangles with labelled corners.

To a generator $r_{ij}^k$ with $i<j$, $k\neq i,j$ associate the  triangle with labelled corners
$$r_{ij}^k\quad\longleftrightarrow\quad \vcenter{\hbox{\begin{tikzpicture}
\draw (0,0) node[anchor=north]{$i$}
  -- (2,0) node[anchor=north]{$j$}
  -- (1,1) node[anchor=south]{$k$}
  -- cycle;
\end{tikzpicture}}}$$
Consider three types of ways to add a corner to the triangles:
\begin{align}
\vcenter{\hbox{
\begin{tikzpicture}
\draw (0,0) node[anchor=north]{$i$}
  -- (2,0) node[anchor=north]{$j$}
  -- (1,1) node[anchor=south]{$k$}
  -- cycle;
\end{tikzpicture}}}&&\longrightarrow&& -\vcenter{\hbox{
\begin{tikzpicture}
\draw (0,0) node[anchor=north]{$i$}  --(1,1)
  -- (2,0) node[anchor=north]{$j$}
    -- (1,0) node[anchor=north]{$l$}
  -- (1,1) node[anchor=south]{$k$} --(2,0)
  -- cycle;
\end{tikzpicture}}}, && i<l<j, \text{ and }l\neq k,\label{triangulation1}
\end{align}
\begin{align}
\vcenter{\hbox{
\begin{tikzpicture}
\draw (0,0) node[anchor=north]{$i$}
  -- (2,0) node[anchor=north]{$j$}
  -- (1,1) node[anchor=south]{$k$}
  -- cycle;
\end{tikzpicture}}}&&\longrightarrow&& -\vcenter{\hbox{
\begin{tikzpicture}
\draw (0,0) node[anchor=north]{$i$}  
  -- (2,0) node[anchor=north]{$j$} 
   -- (1.5,0.5) node[anchor=south]{$l$}
  -- (1,1) node[anchor=south]{$k$} -- (0,0)-- (1.5,0.5);
\end{tikzpicture}}}, && i<l<j, \text{ and }l\neq k,\label{triangulation2}
\end{align}
\begin{align}
\vcenter{\hbox{
\begin{tikzpicture}
\draw (0,0) node[anchor=north]{$i$}
  -- (2,0) node[anchor=north]{$j$}
  -- (1,1) node[anchor=south]{$k$}
  -- cycle;
\end{tikzpicture}}}&&\longrightarrow&& \vcenter{\hbox{
\begin{tikzpicture}
\draw (0,0) node[anchor=north]{$i$}  
  -- (2,0) node[anchor=north]{$j$} 
   -- (1.5,0.5) node[anchor=south]{$k$}
  -- (1,1) node[anchor=south]{$l$} -- (0,0)-- (1.5,0.5);
\end{tikzpicture}}}, && i<k<j, \text{ and }l\neq i,k.\label{triangulation3}
\end{align}
The triangulations on the right hand side correspond to the products 
\begin{align*}
-r_{il}^kr_{lj}^k &&\longleftrightarrow && (\ref{triangulation1}), && -r_{il}^kr_{lj}^i &&\longleftrightarrow &&(\ref{triangulation2}),&&r_{ik}^lr_{kj}^i &&\longleftrightarrow && (\ref{triangulation3}).
\end{align*}
We can recover a quadratic monomial from a triangulation by reading from left to right, reflecting the second triangle in the cases (\ref{triangulation2}) and (\ref{triangulation3}) so that the left corner becomes the top corner.

Now the map $\der(r_{ij}^k)$ is the sum over all ways to triangulate $\vcenter{\hbox{\begin{tikzpicture}
\draw (0,0) node[anchor=north]{$i$}
  -- (1,0) node[anchor=north]{$j$}
  -- (0.5,0.5) node[anchor=south]{$k$}
  -- cycle;
\end{tikzpicture}}}$ by adding one corner in any of the three ways described in (\ref{triangulation1})--(\ref{triangulation3}).

\begin{corollary}$~$ 
The map $\der_B\colon \BPl{2}\to \BPl{2}$ given by
\begin{align}
\der_B(r_{ij}^k)&=\begin{cases}
-\sum_{l=i+1}^{j-1}(r_{il}^kr_{lj}^k+r_{il}^kr_{lj}^i)+\sum_{l\neq i}r_{ik}^lr_{kj}^i,&i<k<j,\\
-\sum_{l=i+1}^{j-1}(r_{il}^kr_{lj}^k+r_{il}^kr_{lj}^i), & \text{otherwise}.
\end{cases}
\end{align}
is a differential for $\BPl{2}$.
\end{corollary}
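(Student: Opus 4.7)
The plan is to invoke the non-homogeneous Koszul duality of \cite{PP}*{Chapter~5.4}. Since $\RPl{2}$ has been established as non-homogeneous Koszul, the Polishchuk--Positselski framework automatically endows its quadratic dual $\BPl{2}$ with a canonical degree one derivation $\der_B$ satisfying $\der_B^2=0$ and the signed Leibniz rule. On the space of generators, this derivation equals the transpose $\alpha^{\vee}$, where $\alpha\colon R^{(0)} \to V$ records the linear part of the inhomogeneous relations of $\RPl{2}$. Both the well-definedness on the ideal of relations of $\BPl{2}$ and the identity $\der_B^2=0$ are automatic from the Koszul property, so the only task is to verify that the formula in the statement matches $\alpha^{\vee}$ on generators.

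To this end I would enumerate the three families of inhomogeneous relations presenting $\RPl{2}$ as displayed in the example following the proposition in Section~\ref{Rnk}; all three are indexed by $i<j<l$. Family~(a) is $q_{ij}^m q_{jl}^m = q_{il}^m$ for $m \notin \{i,j,l\}$, with linear part $-q_{il}^m$. Family~(b) is $q_{ij}^m q_{jl}^i = q_{il}^m - q_{il}^j$ for $m \notin \{i,j,l\}$, with linear part $-q_{il}^m + q_{il}^j$. Family~(c) is $q_{ij}^l q_{jl}^i = -q_{il}^j$, with linear part $q_{il}^j$. The coefficient of a degree two monomial $r_{xy}^X r_{yz}^Y$ in $\der_B(r_{ab}^c)$ then equals the coefficient of $q_{ab}^c$ in the linear part of the relation whose quadratic term is $q_{xy}^X q_{yz}^Y$. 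Applied to $r_{ij}^k$, families~(a) and~(b) contribute $-r_{il}^k r_{lj}^k$ and $-r_{il}^k r_{lj}^i$ respectively as $l$ ranges over $i<l<j$, with the $l=k$ terms vanishing automatically by~\eqref{dualrelation1}; this accounts for the first summation. Families~(b) and~(c) contribute via their $+q_{il}^j$ linear term only when this symbol equals $q_{ij}^k$, which forces the middle index to be $k$ and hence $i<k<j$; in that situation they jointly yield $\sum_{l\neq i} r_{ik}^l r_{kj}^i$ (with the $l=k$ term again dropping by~\eqref{dualrelation1}), matching the case distinction in the statement.

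The main obstacle is the careful bookkeeping of signs and of the case split $i<k<j$ versus $k\notin(i,j)$; beyond this the result is a direct application of the Polishchuk--Positselski theorem. As a combinatorial sanity check, the triangulation pictures~\eqref{triangulation1}--\eqref{triangulation3} correspond precisely to the three relation families (a), (b), (c), so $\der_B(r_{ij}^k)$ is the signed enumeration of all one-corner refinements of the oriented triangle with corners labeled $i$, $j$, $k$, exactly as described in the paragraph preceding the corollary.
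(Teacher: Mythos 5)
Your proposal is correct and follows essentially the same route as the paper: the dg structure on $\BPl{2}$ is the one supplied by the non-homogeneous quadratic duality of \cite{PP}*{5.4} applied to the quadratic-linear algebra $\RPl{2}$ (no curvature term since the relations have no constant part), and the explicit formula is obtained by dualizing the map recording the linear parts of the three relation families, exactly as you enumerate them. Your index bookkeeping, the automatic vanishing of the $l=k$ terms via relation (\ref{dualrelation1}), and the identification of the three families with the triangulation moves (\ref{triangulation1})--(\ref{triangulation3}) all match the paper's (largely implicit) verification.
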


We can explicitly compute the homology of these dg algebras in small examples:  the homology of $B_3^{(2)}$ has graded dimensions $1+2t$, and the homology of $B_4^{(2)}$ has graded dimensions $1+7t+2t^2$. 

The algebras $C_n^{(k)}$ will not give dg algebras, but rather examples of non-trivial \emph{curved} dg algebras \cite{PP}*{5.4, Definition~1}.

\section{Non-commutative Flag Coordinates}\label{flagsect}

Note that in addition to the algebra of quasi-Pl\"ucker coordinates, one can consider the algebra of \emph{flag coordinates}. Flag coordinates also generalize to non-commutative entries, using quasi-determinants \cite{GR4}*{Section II.2.7}, \cite{GGRW}*{Section 4.10}. Given a $k\times n$-matrix, $n\geq k$, choose distinct indices $i,j_1,\ldots j_{k-1}$ in $\underline{n}$ and denote
\begin{equation}
f_{i,\lbrace j_1,\ldots, j_k\rbrace}(A) =\begin{vmatrix}
a_{1i} &a_{1j_1}&\ldots& a_{1j_{k-1}}\\
\vdots &\vdots&\ddots &\vdots\\ 
a_{ki}&a_{kj_1}& \ldots &a_{kj_{k-1}}\end{vmatrix}_{k1},
\end{equation}
which is independent of the order of $\lbrace  j_1,\ldots, j_k\rbrace$. These functions are referred to as \emph{non-commutative flag coordinates}  and were introduced in \cite{GR1}.

For a set $I$ of smaller size, one can consider $f_{i,I}(A)$ by restricting to the first $|I|+1$ rows of $A$. Then the following relations hold \cite{GGRW}*{Section 4.10.2}:
\begin{align}
f_{i, I}f_{i, I\setminus k}^{-1}&=-f_{k, I\setminus k\cup i}f_{k, I\setminus k}^{-1}& \forall k\in I,\label{flagrel1}\\
\sum_{i=1}^kf_{j_i,J\setminus j_i}f_{j_i,J\setminus \lbrace j_i,j_{i-1}\rbrace}^{-1}&=0,\label{flagrel2}
\end{align}
where $J=\lbrace j_1,\ldots, j_k\rbrace$ and we also denote $j_0=j_k$.

\begin{definition}
For $n\geq 2$, we denote by $\FPl{n}$  the non-homogeneous quadratic algebra with generators $f_{i,I}$, where $I$ is a subset of $\underline{n}$ and $i\notin I$, and relations given by (\ref{flagrel1})--(\ref{flagrel2}) as well as \begin{equation}\label{flagrel3}
f_{i,I}f_{i,I}^{-1}=f_{i,I}^{-1}f_{i,I}=1.
\end{equation}
\end{definition}

Note that by virtue of the relations
\begin{equation}
q_{ij}^I(A)=f_{i,I}(A)^{-1}f_{j,I}(A),
\end{equation}
there exists a homomorphism of algebras from $\QPl{k}$ to the quotient skew-field of $F_n$ \cite{GGRW}*{Section 4.10}.
See also \cite{Lau2}*{Proposition 70}.

\begin{theorem}
The algebras $\FPl{n}$ are non-homogeneous Koszul.
\end{theorem}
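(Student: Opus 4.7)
The plan is to follow the strategy of Theorem~\ref{mainthm}, presenting $\FPl{n}$ as a non-homogeneous quadratic algebra in the sense of \cite{PP}*{Chapter~5} and exhibiting a quadratic Gr\"obner basis for the quadratic dual of its associated quadratic algebra.

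First, introduce auxiliary generators $g_{i,I}$ standing for $f_{i,I}^{-1}$, so that $\FPl{n}$ is presented by generators $\lbrace f_{i,I},\, g_{i,I}\rbrace$ with relations
\begin{align*}
f_{i,I}g_{i,I}&=g_{i,I}f_{i,I}=1,\\
f_{i,I}g_{i,I\setminus k} + f_{k,I\setminus k\cup i}g_{k,I\setminus k}&=0,\\
\sum_{i=1}^k f_{j_i,J\setminus j_i}\,g_{j_i,J\setminus\lbrace j_i,j_{i-1}\rbrace}&=0.
\end{align*}
The only non-homogeneous part is the scalar $1$ in the first pair of relations; the remaining relations are already quadratic. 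Pass to the associated quadratic algebra $\FPl{n}^{(0)}$ by replacing the inverse relations with their homogeneous parts $f_{i,I}g_{i,I}=0=g_{i,I}f_{i,I}$.

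Next, form the quadratic dual $\bigl(\FPl{n}^{(0)}\bigr)^!$, whose degree-two part is dual to the space of quadratic relations above. Its structure is analogous to the one worked out in Lemma~\ref{dualrelations}: products between two dual generators of the same family (two $f$-duals or two $g$-duals) are forced to vanish since there are no $ff$ or $gg$ relations, while the mixed products $f^{*}g^{*}$ and $g^{*}f^{*}$ are cut down and identified using the duals of (\ref{flagrel1}) and (\ref{flagrel2}) and the dual inverse relations. Equip the dual generators with a linear order (say, all $f$-duals before all $g$-duals, with lex order on $(i,I)$ breaking ties) and extend to monomials via \texttt{degrevlex}. The defining identities then read as three families of rewriting rules, one for each family of original relations, whose left-hand sides are their leading monomials. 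Verify by direct inspection that every critical triple is confluent, stabilising after at most two steps exactly as in the proof of Theorem~\ref{mainthm}.

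Confluence produces a quadratic non-commutative Gr\"obner basis for $\bigl(\FPl{n}^{(0)}\bigr)^!$, and hence $\FPl{n}^{(0)}$ is Koszul by \cite{LV}*{Theorem~4.1.1}. The non-homogeneous Koszul property of $\FPl{n}$ then follows from the definition recalled at the start of Section~3. The main obstacle lies in the confluence casework: with two families of dual generators interacting via three different types of rewriting rules, the number of critical triples is significantly larger than in the quasi-Pl\"ucker setting, and particular care is needed to analyse how the subset-replacement rules from (\ref{flagrel1}) interact with the sum-type rules from (\ref{flagrel2}), since rewriting one sub-word can change which further rules apply.
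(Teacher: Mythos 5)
Your proposal follows essentially the same route as the paper: pass to the quadratic dual $G_n=(F_n^{(0)})^!$ of the associated quadratic algebra, order the dual generators lexicographically by $(|I|,i,I)$ with \texttt{degrevlex} on monomials, and exhibit a quadratic non-commutative Gr\"obner basis via a rewriting rule. The one point worth noting is that the casework is lighter than you anticipate: since every defining relation pairs a generator with an inverse-type generator, all same-family products vanish in the dual, so nonzero words must alternate between the two families and the paper gets by with the single rewriting rule $g_{i, I}g_{i, I\setminus k}^{-1}\mapsto g_{m_I,I\setminus m_I\cup i}\,g_{m_I,I\setminus m_I}^{-1}$ with $m_I=\min I$.
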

\begin{proof}
Consider the quadratic dual $G_n:=(F_n^{(0)})^!$ of the homogeneous part of the relations (\ref{flagrel1})--(\ref{flagrel3}). In this algebra, denoting the dual generator for $f_{i,I}$ by $g_{i,I}$, we have
\begin{align}
g_{i,I}g_{i,I}^{-1}&\neq 0,\qquad g_{i,I}^{-1}g_{i,I}\neq 0,\\
g_{i, I}g_{i, I\setminus k}^{-1}&=g_{k, I\setminus k\cup i}g_{k, I\setminus i}^{-1}& \forall k\in I,\\
g_{j,J\setminus j}g_{j,J\setminus \lbrace j,k\rbrace}^{-1}&=g_{l,J\setminus l}g_{l,J\setminus \lbrace l,j\rbrace}^{-1},& l,j\in J,\
\end{align}
plus all other quadratic monomials not appearing in these relations are zero.
We order the generators $g_{i,I}^{\pm 1}$ lexicographically according to the triple $(|I|,i,I)$ and $g<g^{-1}$, and use the \texttt{degrevlex} ordering on monomials. Then the normal words of degree two are
\begin{align*}
g_{i,I}g_{i,I}^{-1},&&g_{i,I}^{-1}g_{i,I},&&g_{j,J}g_{j,J\setminus k}^{-1},
\end{align*}
where $i\notin I$, and $j<J$ (in particular $j<k$, otherwise no restriction on $k\in J$).
The rewriting rule is given by
\begin{align*}
g_{i, I}g_{i, I\setminus k}^{-1}\mapsto g_{m_I,I\setminus m_I\cup i}g_{m_I,I\setminus m_I}^{-1},
\end{align*}
where $m_I=\min I$.

Monomials in which every two neighboring generators are one of these normal words give a basis for $G_n$, which is thus a non-commutative PBW basis, and hence $F_n$ is non-homogeneous Koszul.
\end{proof}

\bibliography{biblio}

\bibliographystyle{amsrefs}

\end{document}